\newtheorem{theorem}{Theorem}[section]
\newtheorem{corollary}[theorem]{Corollary}
\newtheorem{lemma}[theorem]{Lemma}
\newtheorem{proposition}[theorem]{Proposition}
\theoremstyle{definition}
\newtheorem{definition}[theorem]{Definition}
\newtheorem{example}[theorem]{Example}
\newtheorem{remark}[theorem]{Remark}
\title{The integral shuffle algebra and the $K$-theory of the Hilbert scheme of points in $\mathbb{A}^2$}
\author{Frank Wang}
\date{\vspace{-5ex}}
\begin{document}

\maketitle
\begin{abstract}

    We examine the shuffle algebra defined over the ring $\mathbf{R} = \mathbb{C}[q_1^{\pm 1}, q_2^{\pm 1}]$, also called the integral shuffle algebra, which was found by Schiffmann and Vasserot to act on the equivariant $K$-theory of the Hilbert scheme of points in the plane. We find that the modules of 2 and 3 variable elements of the integral shuffle algebra are finitely generated and prove a necessary condition for an element to be in the integral shuffle algebra for arbitrarily many variables.
\end{abstract}
\section{Introduction}
\subsection{Motivation}
The shuffle presentation of the quantum toroidal algebra is studied by Feigin and Odesskii in \cite{MR1873567}, where they considered a ``shuffle" product, denoted $*$, of two symmetric rational functions $P(z_1,\dots,z_k)$ and $Q(z_1, \dots,z_l)$ that outputs a rational function in $k+l$ variables in the following form: $$P(z_1,\dots,z_k) * Q(z_1, \dots,z_l) = \frac{1}{k!l!}\text{Sym}\bigg[P(z_1,\dots,z_k)Q(z_{k+1},\dots,z_{k+l})\prod_{1 \leq i \leq k < j \leq k + l}\omega(z_1,z_j)\bigg].$$ In this paper, we examine the integral shuffle algebra $A^\mathbf{R}$ defined over the ring $\mathbf{R} = \mathbb{C}[q_1^{\pm 1},q_2^{\pm 1}]$.

A closely related shuffle algebra to the integral shuffle algebra is the fractional shuffle algebra, defined over the field $\mathbb{C}(q_1,q_2)$. The structure of this algebra was studied by Negut \cite{negut2014shuffle}, in which it was found that the wheel conditions formed necessary and sufficient conditions for a rational function to be in the algebra.

\subsection{Relation to other work}
The Hilbert scheme of $n$ points in the plane Hilb$_n$ is defined as the set of ideals $I$ of $\mathbb{C}[x,y]$ such that $\mathbb{C}[x,y]/I$ has dimension $n$ as a vector space over $\mathbb{C}$. Schiffmann and Vasserot \cite{schiffmann2013elliptic} showed that the integral shuffle algebra acts on the equivariant $K$-theory of the Hilbert scheme of points in the plane. The fractional shuffle algebra was also shown to act on a localization of the equivariant $K$-theory of the Hilbert scheme of points in the plane.

In \cite{gorsky2016flag} Gorsky-Negut-Rasmussen proposed a monoidal functor from the monoidal category of coherent sheaves on the flag Hilbert scheme to the (non-symmetric) monoidal category of Soergel bimodules. Moreover, Gorsky-Negut \cite{gorsky2015refined} and Oblomkov-Rozansky \cite{oblomkov2018knot} related the Hilbert scheme of points in
$\mathbb{A}^2$ to knot invariants. 

Knot theory examines the way in which curves and surfaces can be tied in knots. This kind of knotting is not only relevant to understanding topology, but has recently become significant in the study of DNA. Confined to a small space, long strands of DNA naturally become knotted, and certain processes depend upon an understanding of the complexity of these knots. Applications of this project include effective ways of measuring different types of complexity of knots.
\subsection{Description of the results}
In this paper, we study the structure of the integral shuffle algebra $A^\mathbf{R}$. We prove the following fundamental properties of the subsets $A^\mathbf{R}_k, k\in\mathbb{Z}^+$ of the algebra:

\begin{theorem} [Theorem \ref{thm3:3}] \label{thm1:1}
    Let $A^\mathbf{R}_k$ be the subset of the integral shuffle algebra consisting of functions in $k$ variables. Then the following hold:
    \begin{enumerate}[label = (\alph*)]
        \item $A^\mathbf{R}_k$ is a module over $\mathbf{R}[z_1^{\pm 1},\dots,z_k^{\pm 1}]$.
        \item $A^\mathbf{R}_2$ is generated by $z_1*z_1^0$ and $z_1^0*z_1^0$ as a module over $\mathbf{R}[z_1^{\pm 1},z_2^{\pm 1}]$.
        \item $A^\mathbf{R}_3$ is generated by the elements $z_1^{d_1}*z_1^{d_2}*z_1^0$ for $0\leq d_1\leq 2$, $0\leq d_2\leq 1$ as a module over $\mathbf{R}[z_1^{\pm 1},z_2^{\pm 1},z_3^{\pm 1}]$.
    \end{enumerate}
\end{theorem}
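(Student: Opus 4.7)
The plan is to first make explicit the description of $A^\mathbf{R}_k$: each element is a symmetric rational function with a standard denominator built from products of the shuffle kernel $\omega(z_i, z_j)$, whose numerator is a symmetric Laurent polynomial in $z_1, \ldots, z_k$ over $\mathbf{R}$ satisfying certain wheel-type vanishing conditions. With this description, part (a) reduces to a direct verification: multiplication of $F \in A^\mathbf{R}_k$ by a symmetric Laurent polynomial $g$ preserves symmetry, does not change the standard denominator (since $g$ introduces no new poles), and preserves the wheel conditions, since multiplying by $g$ cannot destroy a zero on the wheel locus.

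For part (b), I would first verify that $z_1 * z_1^0$ and $z_1^0 * z_1^0$ themselves belong to $A^\mathbf{R}_2$, which is immediate from the shuffle definition. Given an arbitrary $F \in A^\mathbf{R}_2$, I would put $F$, $z_1 * z_1^0$, and $z_1^0 * z_1^0$ over a common denominator, reducing the question to whether every admissible symmetric Laurent-polynomial numerator can be written as a symmetric Laurent combination of the two generator numerators. Since wheel conditions require three or more variables and impose no constraint here, the problem is purely one about symmetric Laurent polynomials: after symmetrization, the two generator numerators should span the space of admissible numerators as a module over the symmetric Laurent polynomials in $z_1, z_2$, and the coefficients $g_1, g_2$ are extracted by direct computation.

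Part (c) follows the same overall strategy, but the three-variable wheel condition now enters essentially. After clearing denominators, the task is to write a symmetric Laurent polynomial $f(z_1, z_2, z_3)$ satisfying the wheel condition as a symmetric-Laurent combination of the six numerators of $z_1^{d_1} * z_1^{d_2} * z_1^0$ for $0 \le d_1 \le 2$ and $0 \le d_2 \le 1$. My approach is induction on multi-degree, peeling off terms of highest degree first in $z_3$, then $z_2$, then $z_1$, by subtracting suitable multiples of the six generators and checking that the remainder still lies in $A^\mathbf{R}_3$. The main obstacle will be ensuring that the coefficients produced during this reduction are genuine Laurent polynomials over $\mathbf{R}$, with no spurious denominators in $q_1, q_2$; this is precisely what distinguishes the integral shuffle algebra from the fractional one, and is where the wheel condition must be used carefully rather than merely as a supporting hypothesis. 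I expect the bounds $d_1 \le 2$ and $d_2 \le 1$ to drop out of this reduction, reflecting exactly which residual degrees in $z_1, z_2$ cannot be absorbed into symmetric Laurent-polynomial coefficients.
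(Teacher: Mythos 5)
Your proposal rests on an identification of $A^\mathbf{R}_k$ with the set of symmetric Laurent polynomials (or rational functions with a standard denominator) satisfying the wheel conditions, and every step of your plan --- ``preserves the wheel conditions'' in (a), ``every admissible numerator'' in (b), ``a symmetric Laurent polynomial satisfying the wheel condition'' in (c) --- quietly uses this identification as if it were the definition. But it is not: $A^\mathbf{R}_k$ is \emph{defined} as the span of shuffle products $z_1^{d_1}*\cdots*z_1^{d_k}$, and the wheel conditions are only known to be \emph{necessary} (Proposition \ref{prop2:5}). Sufficiency is Negut's theorem for the fractional algebra over $\mathbb{C}(q_1,q_2)$; for the integral algebra over $\mathbf{R}=\mathbb{C}[q_1^{\pm1},q_2^{\pm1}]$ it is precisely the open question (see the end of Section \ref{sec4}), and the integrality obstruction you flag as ``the main obstacle'' in part (c) is not a technical detail to be checked but the entire unsolved content of that question. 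Concretely: in part (a), showing that $gF$ is symmetric and satisfies the wheel conditions does not show $gF\in A^\mathbf{R}_k$; and in part (b) your observation that the wheel conditions are vacuous for $k=2$ would, if your reduction were valid, force $A^\mathbf{R}_2=V_2$, contradicting the theorem itself, since the module generated by $1_1*1_1$ and $z_1*1_1$ is a proper ideal of $V_2$ (e.g.\ it does not contain $1_2$).

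The paper's route avoids any intrinsic characterization. Proposition \ref{prop3:1} shows $A^\mathbf{R}_k$ is spanned over $\mathbf{R}$ by the shuffle monomials $z_1^{d_1}*\cdots*z_1^{d_k}$, and Lemma \ref{lemma3:2} gives exact identities
$(z_1\cdots z_k)^n(z_1^{d_1}*\cdots*z_1^{d_k})=z_1^{d_1+n}*\cdots*z_1^{d_k+n}$ and
$(z_1^n+\cdots+z_k^n)(z_1^{d_1}*\cdots*z_1^{d_k})=\sum_i z_1^{d_1}*\cdots*z_1^{d_i+n}*\cdots*z_1^{d_k}$,
proved by pushing the symmetric multiplier inside the $\mathrm{Sym}$. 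Part (a) then follows because power sums and $(z_1\cdots z_k)^{\pm1}$ generate $V_k$; parts (b) and (c) follow by solving these identities for the ``new'' shuffle monomial in terms of previously obtained ones, via a finite list of explicit relations for small exponents and an induction on the maximal exponent. If you want to repair your proposal, you must either work entirely inside the span of shuffle monomials as the paper does, or first prove the (open) statement that wheel conditions plus integrality characterize $A^\mathbf{R}_k$ over $\mathbf{R}$ --- which is a much harder problem than the theorem you are asked to prove.
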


We also look at the general structure of the integral shuffle algebra and prove the following necessary condition for an element to be in the integral shuffle algebra:

\begin{theorem} [Theorem \ref{thm4:2}] \label{thm1:2}
    $A^{\mathbf{R}}_k$ is contained in the ideal $$(2qz_1^2 - (1 + q_1 + q_2 - 2q + q_1q + q_2q + q^2)z_1z_2 + 2qz_2^2, (1 - q_1)(1 - q_2)(1 - q)(z_1 + z_2))$$of $\mathbf{R}[z_1^{\pm 1}, \dots, z_k^{\pm 1}]$ for $k \geq 2$.
\end{theorem}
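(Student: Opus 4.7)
My plan is to first establish the claim for $k = 2$ using Theorem~\ref{thm1:1}(b), and then reduce the general case $k \geq 3$ to the two-variable one by a specialization argument.

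For the base case $k = 2$, Theorem~\ref{thm1:1}(b) tells us that $A^\mathbf{R}_2$ is generated as a module over $\mathbf{R}[z_1^{\pm 1}, z_2^{\pm 1}]$ by the two elements $z_1 * z_1^0$ and $z_1^0 * z_1^0$. Since the ideal $I$ in the statement is by construction closed under multiplication by $\mathbf{R}[z_1^{\pm 1}, z_2^{\pm 1}]$, it suffices to show that these two generators lie in $I$. I would expand each via the Feigin--Odesskii shuffle product formula, producing explicit symmetrizations involving $\omega(z_1, z_2)$ and $\omega(z_2, z_1)$, clear the diagonal pole, and present each resulting symmetric Laurent polynomial as an explicit $\mathbf{R}[z_1^{\pm 1}, z_2^{\pm 1}]$-linear combination of the two generators of $I$. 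This amounts to a concrete algebraic identity verifiable by expansion and coefficient comparison, though the bookkeeping is nontrivial given the asymmetric look of the quadratic generator.

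For $k \geq 3$, I would reduce to the $k = 2$ case by partial specialization. Given $R(z_1, \ldots, z_k) \in A^\mathbf{R}_k$ and generic values $a_3, \ldots, a_k$ in an appropriate extension of $\mathbf{R}$, the specialization $R(z_1, z_2, a_3, \ldots, a_k)$ should remain an element of $A^\mathbf{R}_2$ (possibly over the extension), whence the base case places it inside the extended ideal $I$. Because $R$ is a Laurent polynomial in $z_3, \ldots, z_k$ and this containment holds for generic specializations, a density argument would force $R$ itself to lie in $I$ inside the $k$-variable Laurent polynomial ring.

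The main obstacle is justifying the specialization step: that the integrality conditions cutting out $A^\mathbf{R}_k$ within the space of symmetric rational functions behave well under evaluation of a subset of the variables. This is not a formal consequence of the definition and will likely require a technical lemma tracing how the $\omega$ factors and integrality denominators restrict under partial specialization. Should specialization prove too delicate, an alternative is an inductive approach via the shuffle product presentation: expressing elements of $A^\mathbf{R}_k$ as shuffles of lower-arity elements and then tracking, after symmetrization, how the two-variable factorization at a pair $(i,j)$ recombines across all pairs, using the $S_k$-symmetry of shuffle-algebra elements. In either route, the crux is to propagate the two-variable factorization established in the base case up to all higher arities.
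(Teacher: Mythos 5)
Your base case is fine and matches the paper's setup: the paper likewise observes that $A^\mathbf{R}_2$ is the ideal $(1_1*1_1,\,z_1*1_1)$ of $V_2$ and rewrites it as the ideal in the statement. The genuine gap is the reduction from $k\geq 3$ to $k=2$. Your entire argument rests on the claim that for generic $a_3,\dots,a_k$ the partial specialization $R(z_1,z_2,a_3,\dots,a_k)$ of an element $R\in A^\mathbf{R}_k$ lies in $A^\mathbf{R}_2$ (over an extension). Nothing in the definition of the integral shuffle algebra supports this: $A^\mathbf{R}_2$ is the $\mathbf{R}$-span of products $z_1^{d_1}*z_1^{d_2}$, and evaluating some variables of a $k$-fold shuffle product does not produce such an expression. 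You acknowledge this yourself, but the missing ``technical lemma'' is not a technicality --- proving that the specialization lands in the ideal $I$ is essentially the statement of the theorem for $k$ variables, so the reduction is circular as it stands. A secondary problem is the final ``density argument'': even granting $R(z_1,z_2,a_3,\dots,a_k)\in I$ for each generic $a$, you need coefficients of the decomposition $R=Ag+Bh$ that are Laurent polynomials in $z_3,\dots,z_k$ \emph{over} $\mathbf{R}=\mathbb{C}[q_1^{\pm1},q_2^{\pm1}]$. Since $\mathbf{R}$ is not a field and the second generator carries the non-unit factor $(1-q_1)(1-q_2)(1-q)$, pointwise or generic membership does not glue to integral membership without a separate argument; integrality over $\mathbf{R}$ is exactly the delicate point of this algebra.

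The paper's proof avoids specialization entirely and is closer to the ``alternative'' you sketch in your last sentences, but with a concrete mechanism you do not supply. Every element of $A^\mathbf{R}_k$ with $k\geq 2$ is a sum of shuffle products $P*Q$ of lower-degree elements, and one examines each summand of the symmetrization: if $z_1$ and $z_2$ both occur as arguments of $P$ (or both of $Q$), that factor is in the ideal by induction and the symmetry of shuffle elements; if they are split between the two blocks, the product of cross-factors contains $\omega(z_1,z_2)$ (or $\omega(z_2,z_1)$), and the crux is the explicit identity
\begin{align*}
\frac{(z_1-qz_2)(z_2-q_1z_1)(z_2-q_2z_1)}{z_1-z_2}
&=\tfrac{1}{2}\bigl(2qz_1^2-(1+q_1+q_2-2q+q_1q+q_2q+q^2)z_1z_2+2qz_2^2\bigr)\\
&\qquad+\frac{z_1z_2}{2(z_1-z_2)}(1-q_1)(1-q_2)(1-q)(z_1+z_2),
\end{align*}
which exhibits the cross-factor itself as a combination of the two ideal generators. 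This identity, and the case split on where $z_1,z_2$ land under the symmetrizing permutation, are the content your proposal is missing; without them neither of your two routes closes.
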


These results will help us better understand the structure of the Hilbert scheme of points in the plane and may also provide insight into the structure of the Hilbert scheme of points in an arbitrary surface.
\subsection{Structure of the paper}

In Section \ref{sec2}, we provide basic definitions and examples for the integral shuffle algebra and note a proposition due to Negut. In Section \ref{sec3}, we describe the generators of $A^\mathbf{R}_k$ and use this approach to prove Theorem \ref{thm1:1}. We also show the limitations of this approach. In Section \ref{sec4}, we present some necessary conditions for an element to be in the integral shuffle algebra in the form of membership of an ideal, prove Theorem \ref{thm1:2}, and propose two open problems for further investigation of the integral shuffle algebra.

\section{The integral shuffle algebra}\label{sec2}
Throughout this paper, we will work over the ring $\mathbf{R} = \mathbb{C}[q_1^{\pm 1}, q_2^{\pm 1}]$ and we will denote $q=q_1q_2$. Also, let us define $$V_k = \text{Sym}_\mathbf{R}[z_1^{\pm 1}, \dots, z_k^{\pm 1}],$$$$V = \bigoplus_{k \geq 0} V_k,$$where $\text{Sym}_\mathbf{R}[z_1^{\pm 1}, \dots, z_k^{\pm 1}]$ is the set of symmetric functions in $z_1^{\pm 1}, \dots, z_k^{\pm 1}$ over $\mathbf{R}$. For the sake of clarity, let us also denote $1_d$ as the element 1 in $V_d$. For example, $1_1$ can be interpreted as $z_1^0$ and $1_2$ can be interpreted as $z_1^0z_2^0$. Next, we introduce a few definitions:

\begin{definition}
    We define the \textbf{shuffle product} $*:V_k\times V_l\rightarrow V_{k+l}$ as the product that takes $P\in V_k,Q\in V_l$ to $$(P*Q)(z_1, \dots, z_{k+l}) = \frac{1}{k!l!}\text{Sym} \bigg[P(z_1, \dots, z_k)Q(z_{k + 1}, \dots, z_{k + l})\prod_{1 \leq i \leq k < j \leq k + l}\omega(z_i,z_j)\bigg],$$where Sym denotes the symmetric sum, i.e. $$\text{Sym}(P(z_1,\dots,z_k)) = \sum_{\sigma\in S_k}P(z_{\sigma(1)},\dots,z_{\sigma(k)}),$$and$$\omega(z_i,z_j)=\frac{(z_i - qz_j)(z_j - q_1z_i)(z_j - q_2z_i)}{z_i - z_j}.$$
\end{definition}

The shuffle product is associative, as noted in \cite{MR1873567}. Throughout this paper, we will adopt the notation used in \cite{negut2014shuffle} and use an asterisk $*$ to denote the shuffle product and parentheses to denote standard multiplication.

\begin{example}
    Let us compute $1_1*1_1$. We have 
    \begin{align*}
        1_1*1_1 & =\text{Sym} \bigg[z_1^0z_2^0\prod_{1 \leq i \leq 1 < j \leq 2}\frac{(z_i - qz_j)(z_j - q_1z_i)(z_j - q_2z_i)}{z_i - z_j}\bigg]\\
        & =\text{Sym} \bigg[\frac{(z_1 - qz_2)(z_2 - q_1z_1)(z_2 - q_2z_1)}{z_1 - z_2}\bigg].
    \end{align*}
    We can now expand out the Sym:
    \begin{align*}
        1_1*1_1 &= \frac{(z_1 - qz_2)(z_2 - q_1z_1)(z_2 - q_2z_1)}{z_1 - z_2} + \frac{(z_2 - qz_1)(z_1 - q_1z_2)(z_1 - q_2z_2)}{z_2 - z_1} \\
        & = \frac{(z_1 - qz_2)(z_2 - q_1z_1)(z_2 - q_2z_1) - (z_2 - qz_1)(z_1 - q_1z_2)(z_1 - q_2z_2)}{z_1 - z_2}.
    \end{align*}
    The rest of the derivation consists only of basic computation, so we skip to the final form:
    $$1_1*1_1=2qz_1^2 - (1 + q_1 + q_2 - 2q + q_1q + q_2q + q^2)z_1z_2 + 2qz_2^2.$$
\end{example}
\begin{definition}
    The \textbf{integral shuffle algebra} $A^{\mathbf{R}} \subset V$ is the algebra over $\mathbf{R}$ generated by elements of the form $z_1^d$, $d\in \mathbb{Z}$, with the product of the algebra being the shuffle product. A \textbf{shuffle element} is an element of the integral shuffle algebra. We will denote $A^{\mathbf{R}}_k = A^{\mathbf{R}} \cap V_k$ and say that a shuffle element in $A^{\mathbf{R}}_k$ has \textbf{degree} $k$.
    
\end{definition}
\begin{example}
    $A^{\mathbf{R}}_1$ is the set of Laurent polynomials in $z_1$ with coefficients in $\mathbf{R}$.
\end{example}

The next proposition was proved by Negut in \cite{negut2014shuffle} for the shuffle algebra over $\mathbb{C}(q_1,q_2)$, and the proof also holds for the integral shuffle algebra.

\begin{proposition}[\cite{negut2014shuffle}]\label{prop2:5}
    All shuffle elements are symmetric Laurent polynomials that satisfy the \textbf{wheel conditions}:$$p(z_1, z_2, z_3, \dots) = 0\text{ whenever }\bigg\{\frac{z_1}{z_2}, \frac{z_2}{z_3}, \frac{z_3}{z_1}\bigg\} = \bigg\{q_1, q_2, \frac{1}{q}\bigg\}.$$
\end{proposition}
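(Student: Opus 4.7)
The plan is to prove the stronger statement that the subspace $W_k \subseteq V_k$ of symmetric Laurent polynomials satisfying the wheel conditions is closed under the shuffle product, and then induct on the number of generators used to build a given shuffle element. The base case is immediate: each generator $z_1^d$ lies in $V_1$, and the wheel condition is vacuous since it involves three variables. For the inductive step, I would assume $P\in W_k$ and $Q\in W_l$ and show $P*Q\in W_{k+l}$; symmetry is automatic from the definition of Sym, and closure under $\mathbf{R}$-linear combinations is trivial, so only the Laurent polynomial property and the wheel condition need verification.

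For the Laurent polynomial property, write $\Phi = P(z_1,\dots,z_k)Q(z_{k+1},\dots,z_{k+l})\prod_{i\le k<j}\omega(z_i,z_j)$, so that $P*Q = \tfrac{1}{k!l!}\sum_{\sigma\in S_{k+l}}\sigma\cdot\Phi$ has potential poles only along hyperplanes $z_i=z_j$. I would show the residue along $z_1=z_2$ vanishes by pairing each $\sigma$ with $\sigma'=(1,2)\circ\sigma$: only those $\sigma$ with $\sigma^{-1}(1)$ and $\sigma^{-1}(2)$ on opposite sides of $k$ contribute, and these come in such pairs. A direct calculation gives $\operatorname{Res}_{z_1=z_2}\omega(z_1,z_2) = z_2^3(1-q)(1-q_1)(1-q_2) = -\operatorname{Res}_{z_1=z_2}\omega(z_2,z_1)$, and because $(\sigma'\cdot\Phi)(z_1,z_2,\dots) = (\sigma\cdot\Phi)(z_2,z_1,\dots)$ the remaining (non-polar) factors agree when restricted to the diagonal $z_1=z_2$. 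The two residues therefore cancel, and by symmetry of the argument the residue along every $z_i=z_j$ vanishes.

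For the wheel condition, specialise $z_1=qz_3,\ z_2=q_2z_3$ and examine each summand $\sigma\cdot\Phi$. Partition $\{1,2,3\}$ into $L=\{a:\sigma^{-1}(a)\le k\}$ and $R=\{1,2,3\}\setminus L$. If $L=\{1,2,3\}$ then $z_1,z_2,z_3$ all appear as arguments of $P$, and the inductive wheel condition on $P$ forces $\sigma\cdot\Phi=0$; symmetrically if $R=\{1,2,3\}$. For each of the six remaining splits, I would check that at least one of $\omega(z_2,z_1),\ \omega(z_1,z_3),\ \omega(z_3,z_2)$ occurs in $\prod_{i\le k<j}\omega(z_{\sigma(i)},z_{\sigma(j)})$; these are precisely the three $\omega(z_a,z_b)$ whose ratio $z_a/z_b$ on the wheel equals $1/q_1,\ q,\ 1/q_2$, the three zeros of $\omega$. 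A direct enumeration of the six splits confirms that such a factor is always present, so $\sigma\cdot\Phi$ vanishes on the wheel for every $\sigma$.

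I expect the main obstacle to be the residue-cancellation step, where one must keep track of how the transposition $(1,2)$ acts on every other $\omega$ factor in the product — not only the single one producing the pole — and confirm that all remaining factors become equal after restriction to the diagonal $z_1=z_2$. By comparison, the wheel condition reduces to a short combinatorial case check once the three vanishing factors of $\omega$ on the wheel have been identified.
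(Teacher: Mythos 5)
The paper does not actually prove this proposition: it is quoted from Negut's work on the shuffle algebra over $\mathbb{C}(q_1,q_2)$, with only the remark that the proof carries over to $\mathbf{R}$. So there is no in-paper argument to compare against; judged on its own, your proof is correct and is essentially the standard argument from the cited literature. The reduction to closure of $W_k$ under $*$ is the right framing, the residue computation $\operatorname{Res}_{z_1=z_2}\omega(z_1,z_2)=z_2^3(1-q)(1-q_1)(1-q_2)=-\operatorname{Res}_{z_1=z_2}\omega(z_2,z_1)$ is right, and the pairing $\sigma\leftrightarrow(1\,2)\circ\sigma$ does preserve the set of summands with a pole along $z_1=z_2$ (the pole is always simple, since the pair $\{1,2\}$ occurs in at most one factor of the product), so the cancellation goes through. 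For the wheel condition, under $z_1=qz_3$, $z_2=q_2z_3$ the vanishing factors are exactly $\omega(z_2,z_1)$, $\omega(z_1,z_3)$, $\omega(z_3,z_2)$, and since these ordered pairs form a directed $3$-cycle they meet every nontrivial cut of $\{1,2,3\}$ into $L$ and $R$, so all six splits are covered. One point you should make explicit: the wheel condition is a set equality, so there are six admissible tuples $\big(\tfrac{z_1}{z_2},\tfrac{z_2}{z_3},\tfrac{z_3}{z_1}\big)$; your single specialization, together with the symmetry of $p$ in its variables, only covers the $C_3$-orbit of $(q_1,q_2,1/q)$. The other orbit, through $(q_2,q_1,1/q)$, is obtained by interchanging $q_1$ and $q_2$, which is a symmetry of $\omega$ and of the whole construction, so one specialization does suffice --- but that step deserves a sentence rather than being left implicit.
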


\section{Generators for the integral shuffle algebra}\label{sec3}
In describing the elements of $A^{\mathbf{R}}$, it is easy to see that since the shuffle product of two shuffle elements $p(z_1, \dots, z_k)$ and $p'(z_1, \dots, z_{k'})$ has $k + k'$ variables, any $A^\mathbf{R}_k$ is completely determined by the shuffle products of elements in $A_l$ with $l < k$. We can use this to describe the generators of any $A^{\mathbf{R}}_k$, which is useful in describing the structure of the integral shuffle algebra as a whole.

\begin{proposition}\label{prop3:1}
    For any $k\in \mathbb{N}$, $A^{\mathbf{R}}_k$ is generated by elements of the form $z_1^{d_1} * z_1^{d_2} * \cdots * z_1^{d_k}$ as a module over $\mathbf{R}$, where $d_i\in\mathbb{Z}$.
\end{proposition}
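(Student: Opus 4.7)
The plan is to unpack the algebra-theoretic definition of $A^{\mathbf{R}}$ and combine it with the grading on $V$. First, I would use the fact that $A^{\mathbf{R}}$ is by definition the $\mathbf{R}$-subalgebra of $(V,*)$ generated by $\{z_1^d : d \in \mathbb{Z}\}$; hence every shuffle element is an $\mathbf{R}$-linear combination of finite iterated shuffle products $z_1^{d_1} * z_1^{d_2} * \cdots * z_1^{d_m}$ of the generators, where both the length $m$ and the integer exponents $d_i$ are allowed to vary. (One should also remark that associativity of $*$, noted earlier in the paper, lets me speak of such a product without bracketing.)

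The next step is to exploit degree-additivity of the shuffle product. Because $* : V_k \times V_l \to V_{k+l}$ by construction, and each generator $z_1^{d_i}$ lives in $V_1$, an $m$-fold iterated product of generators lies in $V_m$. This is really the only substantive ingredient, and it is immediate from the definition of $*$ (the output is a symmetric function in $k+l$ variables when the inputs have $k$ and $l$ variables respectively).

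Finally, I would invoke the direct-sum decomposition $V = \bigoplus_{k \geq 0} V_k$ together with the definition $A^{\mathbf{R}}_k = A^{\mathbf{R}} \cap V_k$. An arbitrary $f \in A^{\mathbf{R}}_k$ can be written as $f = \sum_i c_i\, P_i$ where $c_i \in \mathbf{R}$ and each $P_i = z_1^{d_{i,1}} * \cdots * z_1^{d_{i,m_i}}$ lies in $V_{m_i}$. Projecting both sides onto $V_k$ via the direct sum kills every term with $m_i \neq k$, so $f$ is in the $\mathbf{R}$-span of the $k$-fold shuffle products of the generators, as claimed.

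I do not anticipate any real obstacle here: the proposition is a grading-preserving reformulation of the algebraic definition of $A^{\mathbf{R}}$, and the only point that needs to be stated cleanly is that $*$ respects the grading on $V$. The proof is therefore essentially a short bookkeeping argument, and its main value is setting up notation (the generators $z_1^{d_1} * \cdots * z_1^{d_k}$) that will be refined in the subsequent results where the exponents $d_i$ are cut down to a finite range.
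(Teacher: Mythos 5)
Your proof is correct, but it is organized differently from the paper's. The paper proves the proposition by induction on $k$: it takes a shuffle product $P*Q$ with $P\in A^{\mathbf{R}}_l$, $Q\in A^{\mathbf{R}}_{k-l}$, expands $P$ and $Q$ via the inductive hypothesis, and carries the resulting sums through the $\mathrm{Sym}[\cdots]$ expression to verify explicitly that $*$ is $\mathbf{R}$-bilinear, after which associativity collapses everything into $k$-fold products $z_1^{d_1}*\cdots*z_1^{d_k}$. You instead argue in one pass: every element of $A^{\mathbf{R}}$ is an $\mathbf{R}$-linear combination of iterated products of the generators (of varying lengths), each $m$-fold product lies in $V_m$ because $*:V_k\times V_l\to V_{k+l}$, and projecting onto the $V_k$-summand of $V=\bigoplus_m V_m$ kills every term of length $m\neq k$. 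This is a clean and valid argument, and in some sense it makes explicit a point the paper's proof leaves implicit --- the paper's opening claim that ``any shuffle element in $A^{\mathbf{R}}_k$ is a linear combination of shuffle products of elements with fewer variables'' is really the same grading/direct-sum observation you use. What your route elides, and what the paper's induction buys, is the explicit verification that the shuffle product distributes over $\mathbf{R}$-linear combinations (so that ``linear combination of iterated products'' is genuinely the general element of the generated subalgebra); you take this bilinearity for granted, which is harmless here since it is immediate from the definition of $\mathrm{Sym}$, but it is the one ingredient you should state rather than assume. Either proof is acceptable.
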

\begin{proof}
    We will prove this using induction. For our base case, we have that $A^{\mathbf{R}}_1$ is generated by the elements $z_1^d$ by the definition of the integral shuffle algebra. Now, assume that the hypothesis holds for all $l < k$. Since any shuffle element in $A^{\mathbf{R}}_k$ is a linear combination of shuffle products of elements with fewer variables, we can simply consider those shuffle products. Consider the shuffle product $$P(z_1, \dots, z_l) * Q(z_1, \dots, z_{k - l})$$$$=\frac{1}{k!l!}\text{Sym} \bigg[P(z_1, \dots, z_l)Q(z_{l + 1}, \dots, z_k)\prod_{1 \leq i \leq l < j \leq k}\omega(z_i,z_j)\bigg].$$
    By the inductive hypothesis, both $P$ and $Q$ can be written as a linear combination of the form 
    \begin{align*}
        P(z_1, \dots, z_l) & = \sum_{i = 1}^m p_i(z_1^{d_{i, 1}} * z_1^{d_{i, 2}} *\cdots *z_1^{d_{i, l}}),\\
        Q(z_{l + 1}, \dots, z_k) & = \sum_{j = 1}^n q_j(z_1^{d_{j, l + 1}} * z_1^{d_{j, l + 2}} *\cdots *z_1^{d_{j, k}}).
    \end{align*}
    Substituting this into the above expression, we get 
    $$P(z_1, \dots, z_l) * Q(z_1, \dots, z_{k - l})$$
    \begin{align*}
        & = \frac{1}{k!l!}\text{Sym} \Bigg[\sum_{i = 1}^m p_i(z_1^{d_{i, 1}} * z_1^{d_{i, 2}} *\cdots *z_1^{d_{i, l}})\sum_{j = 1}^n q_j(z_1^{d_{j, l + 1}} * z_1^{d_{j, l + 2}} *\cdots *z_1^{d_{j, k}})\prod_{1 \leq i \leq l < j \leq k}\omega(z_i,z_j)\Bigg]\\
        & = \frac{1}{k!l!}\sum_{i = 1}^m \sum_{j = 1}^n p_iq_j\text{Sym} \Bigg[(z_1^{d_{i, 1}} * z_1^{d_{i, 2}} *\cdots *z_1^{d_{i, l}})(z_1^{d_{j, l + 1}} * z_1^{d_{j, l + 2}} *\cdots *z_1^{d_{j, k}})\prod_{1 \leq i \leq l < j \leq k}\omega(z_i,z_j)\Bigg]\\
        & = \frac{1}{k!l!}\sum_{i = 1}^m \sum_{j = 1}^n p_iq_j(z_1^{d_{i, 1}} * z_1^{d_{i, 2}} *\cdots *z_1^{d_{i, l}}) * (z_1^{d_{j, l + 1}} * z_1^{d_{j, l + 2}} *\cdots *z_1^{d_{j, k}})\\
        & = \frac{1}{k!l!}\sum_{i = 1}^m \sum_{j = 1}^n p_iq_j(z_1^{d_{i, 1}} * z_1^{d_{i, 2}} *\cdots *z_1^{d_{i, l}} * z_1^{d_{j, l + 1}} * z_1^{d_{j, l + 2}} *\cdots *z_1^{d_{j, k}}),
    \end{align*}
    so the product can be written as a linear combination of the generators in question. Conversely, all of the generators are contained in the integral shuffle algebra by definition.
\end{proof}

The above proposition gives us a complete picture of $A^{\mathbf{R}}_k$ as a module over $\mathbf{R}$. However, the more interesting question is whether $A^{\mathbf{R}}_k$ forms a module over $V_k$, and whether we can say anything about this module. A description of $A^{\mathbf{R}}_k$ over $V_k$ would give us a clearer and more useful description of the integral shuffle algebra as a whole. In this section, we will explore this by examining the integral shuffle algebra via its generators. The next lemma will be the main tool which we use.

\begin{lemma}\label{lemma3:2}
    The following relations hold:
    \begin{enumerate}[label = (\alph*)]
        \item $(z_1z_2\dots z_k)^n(z_1^{d_1} * z_1^{d_2} * \cdots * z_1^{d_k}) = z_1^{d_1 + n} * z_1^{d_2 + n} * \cdots * z_1^{d_k + n}$\label{lemma3:2:a}
        \item $(z_1^n + z_2^n + \dots + z_k^n)(z_1^{d_1} * z_1^{d_2} * \cdots * z_1^{d_k}) = \sum_{i = 1}^k (z_1^{d_1} * \dots * z_1^{d_{i - 1}} * z_1^{d_i + n} * z_1^{d_{i + 1}} * \dots * z_1^{d_k})$\label{lemma3:2:b}
    \end{enumerate}
\end{lemma}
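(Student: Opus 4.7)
The plan is to first derive a clean iterated formula for the shuffle product of single-variable monomials and then observe that both identities follow from the elementary fact that multiplication by a symmetric polynomial commutes with the symmetrization operator. Specifically, I would prove by induction on $k$ the closed form
$$z_1^{d_1}*z_1^{d_2}*\cdots *z_1^{d_k} \;=\; \text{Sym}\bigg[z_1^{d_1}z_2^{d_2}\cdots z_k^{d_k}\prod_{1\le i<j\le k}\omega(z_i,z_j)\bigg].$$
The inductive step writes the $k$-fold shuffle as $P * z_1^{d_k}$, where $P\in V_{k-1}$ is the $(k-1)$-fold shuffle; the definition supplies a prefactor $1/(k-1)!$ and an outer Sym over $S_k$. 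Since $\prod_{i<k}\omega(z_i,z_k)$ is symmetric in $z_1,\ldots,z_{k-1}$, it can be absorbed into the inner Sym over $S_{k-1}$ coming from the inductive hypothesis, after which the identity $\text{Sym}_k\circ\text{Sym}_{k-1} = (k-1)!\cdot\text{Sym}_k$ (viewing $S_{k-1}$ as the subgroup of $S_k$ fixing $k$) exactly cancels the prefactor.

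With this closed form in hand, part (a) is immediate: $(z_1z_2\cdots z_k)^n$ is symmetric, so it may be pulled inside the Sym bracket, where it converts each $z_i^{d_i}$ into $z_i^{d_i+n}$, yielding the right-hand side. For part (b), the power sum $z_1^n+\cdots+z_k^n$ is likewise symmetric; pulling it inside and distributing across the $k$ monomials produces a sum of $k$ terms, the $i$-th of which is precisely $\text{Sym}[z_1^{d_1}\cdots z_i^{d_i+n}\cdots z_k^{d_k}\prod_{j<l}\omega(z_j,z_l)]$, equal by the closed form to the shuffle $z_1^{d_1}*\cdots *z_1^{d_i+n}*\cdots *z_1^{d_k}$.

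The main (and really only) subtle point is the inductive proof of the closed form, specifically the bookkeeping showing that the $1/(k-1)!$ prefactor coming from the definition of $*$ is exactly what is produced when the inner $\text{Sym}_{k-1}$ gets absorbed by the outer $\text{Sym}_k$. Once that is in place, parts (a) and (b) reduce to the one-line observation that a symmetric factor commutes with Sym.
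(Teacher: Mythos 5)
Your proof is correct, and it reorganizes the argument in a way that differs from the paper's. The paper proves (a) and (b) each by a separate induction on $k$: it writes $z_1^{d_1}*\cdots*z_1^{d_k}$ as $\frac{1}{(k-1)!}\mathrm{Sym}\big[(z_1^{d_1}*\cdots*z_1^{d_{k-1}})\,z_k^{d_k}\prod_{i<k}\omega(z_i,z_k)\big]$, pulls the symmetric multiplier $(z_1\cdots z_k)^n$ or $z_1^n+\cdots+z_k^n$ inside the Sym, and invokes the inductive hypothesis on the $(k-1)$-fold shuffle, never unpacking that inner factor. You instead front-load a single induction to establish the fully expanded form $z_1^{d_1}*\cdots*z_1^{d_k}=\mathrm{Sym}\big[z_1^{d_1}\cdots z_k^{d_k}\prod_{i<j}\omega(z_i,z_j)\big]$, after which both (a) and (b) are one-line consequences of the fact that a symmetric factor commutes with Sym. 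Your key bookkeeping step is sound: $z_k^{d_k}\prod_{i<k}\omega(z_i,z_k)$ is symmetric in $z_1,\dots,z_{k-1}$, so it absorbs into the inner $\mathrm{Sym}_{k-1}$, and $\mathrm{Sym}_k\circ\mathrm{Sym}_{k-1}=(k-1)!\,\mathrm{Sym}_k$ cancels the prefactor from the definition of $*$. What your route buys is a reusable closed form (the standard presentation of iterated shuffle products in the literature) and a cleaner derivation of both identities at once; what the paper's route buys is that it never has to verify the coset-counting identity explicitly, since the $(k-1)$-fold shuffle stays a black box at every stage.
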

\begin{proof}
    \ref{lemma3:2:a} We will prove this using induction. For $k = 1$, we have $z_1^n(z_1^{d_1}) = z_1^{d_1 + n}$ by the multiplication of polynomials. For general $k$, we have $$(z_1z_2\dots z_k)^n(z_1^{d_1} * z_1^{d_2} * \cdots * z_1^{d_k})$$$$= (z_1z_2\dots z_k)^n\frac{1}{(k-1)!}\text{Sym} \bigg[(z_1^{d_1} * z_1^{d_2} * \cdots * z_1^{d_{k - 1}})z_k^{d_k}\prod_{1 \leq i \leq k - 1}\omega(z_i,z_k)\bigg].$$Since $(z_1z_2\dots z_k)^n$ is symmetric in $\{z_1, \dots, z_k\}$, we can distribute it into the symmetric operator to obtain the following:
    $$(z_1z_2\dots z_k)^n(z_1^{d_1} * z_1^{d_2} * \cdots * z_1^{d_k})$$
    \begin{align*}
        & = \frac{1}{(k-1)!}\text{Sym} \bigg[(z_1z_2\dots z_k)^n(z_1^{d_1} * z_1^{d_2} * \cdots * z_1^{d_{k - 1}})z_k^{d_k}\prod_{1 \leq i \leq k - 1}\omega(z_i,z_k)\bigg]\\
        & = \frac{1}{(k-1)!}\text{Sym} \bigg[(z_1^{d_1 + n} * z_1^{d_2 + n} * \cdots * z_1^{d_{k - 1} + n})z_k^{d_k + n}\prod_{1 \leq i \leq k - 1}\omega(z_i,z_k)\bigg]\\
        & = z_1^{d_1 + n} * z_1^{d_2 + n} * \cdots * z_1^{d_k + n},
    \end{align*}
    where $(z_1z_2\dots z_{k - 1})^n(z_1^{d_1} * z_1^{d_2} * \cdots * z_1^{d_{k - 1}}) = z_1^{d_1 + n} * z_1^{d_2 + n} * \cdots * z_1^{d_{k - 1} + n}$ by the inductive hypothesis.
    
    \ref{lemma3:2:b} We will also prove this using induction. For $k = 1$, we have $z_1^n(z_1^{d_1}) = z_1^{d_1 + n}$. For general $k$, we have $$(z_1^n + z_2^n + \dots + z_k^n)(z_1^{d_1} * z_1^{d_2} * \cdots * z_1^{d_k})$$
    \begin{align*}
        & = (z_1^n + z_2^n + \dots + z_k^n)\frac{1}{(k-1)!}\text{Sym} \bigg[(z_1^{d_1} * z_1^{d_2} * \cdots * z_1^{d_{k - 1}})z_k^{d_k}\prod_{1 \leq i \leq k - 1}\omega(z_i,z_k)\bigg]\\
        & = \frac{1}{(k-1)!}\text{Sym} \bigg[(z_1^n + z_2^n + \dots + z_k^n)(z_1^{d_1} * z_1^{d_2} * \cdots * z_1^{d_{k - 1}})z_k^{d_k}\prod_{1 \leq i \leq k - 1}\omega(z_i,z_k)\bigg]\\
        & = \frac{1}{(k-1)!}\text{Sym} \bigg[((\sum_{i = 1}^{k - 1} z_1^{d_1} * \dots * z_1^{d_i + n} * \dots * z_1^{d_{k - 1}})z_k^{d_k} \\
        &\qquad + (z_1^{d_1} * z_1^{d_2} * \cdots * z_1^{d_{k - 1}})z_k^{d_k + n})\prod_{1 \leq i \leq k - 1}\omega(z_i,z_k)\bigg]\\
        & = \frac{1}{(k-1)!}\text{Sym} \bigg[((\sum_{i = 1}^{k - 1} z_1^{d_1} * \dots * z_1^{d_i + n} * \dots * z_1^{d_{k - 1}})z_k^{d_k})\prod_{1 \leq i \leq k - 1}\omega(z_i,z_k)\bigg]\\
        &\qquad + \frac{1}{(k-1)!}\text{Sym} \bigg[((z_1^{d_1} * z_1^{d_2} * \cdots * z_1^{d_{k - 1}})z_k^{d_k + n})\prod_{1 \leq i \leq k - 1}\omega(z_i,z_k)\bigg]\\
        & = \sum_{i = 1}^{k - 1} \frac{1}{(k-1)!}\text{Sym} \bigg[((z_1^{d_1} * \dots * z_1^{d_i + n} * \dots * z_1^{d_{k - 1}})z_k^{d_k})\prod_{1 \leq i \leq k - 1}\omega(z_i,z_k)\bigg]\\
        &\qquad + (z_1^{d_1} * z_1^{d_2} * \cdots * z_1^{d_{k - 1}} * z_1^{d_k + n})\\
        & = (\sum_{i = 1}^{k - 1} z_1^{d_1} * \dots * z_1^{d_i + n} * \dots * z_1^{d_{k - 1}} * z_1^{d_k}) + (z_1^{d_1} * z_1^{d_2} * \cdots * z_1^{d_{k - 1}} * z_1^{d_k + n})\\
        & = \sum_{i = 1}^k (z_1^{d_1} * \dots * z_1^{d_{i - 1}} * z_1^{d_i + n} * z_1^{d_{i + 1}} * \dots * z_1^{d_k}),
    \end{align*}
    where $(z_1^n + z_2^n + \dots + z_{k - 1}^n)(z_1^{d_1} * z_1^{d_2} * \cdots * z_1^{d_{k - 1}}) = \sum_{i = 1}^{k - 1} (z_1^{d_1} * \dots * z_1^{d_{i - 1}} * z_1^{d_i + n} * z_1^{d_{i + 1}} * \dots * 
    z_1^{d_{k - 1}})$ by the inductive hypothesis.
\end{proof}

With this lemma we can prove our first main theorem:

\begin{theorem}\label{thm3:3}
    The following statements hold:
    \begin{enumerate}[label = (\alph*)]
        \item $A^\mathbf{R}_k$ is a module over $V_k$ for all $k$.\label{thm3:3:a}
        \item As a $V_2$-module, $A^\mathbf{R}_2$ is generated by $1_1 * 1_1$ and $z_1 * 1_1$.\label{thm3:3:b}
        \item As a $V_3$-module, $A^{\mathbf{R}}_3$ is generated by $z_1^{d_1} * z_1^{d_2} * 1_1$ for $0\leq d_1\leq 2, 0\leq d_2\leq 1$.\label{thm3:3:c}
    \end{enumerate}
\end{theorem}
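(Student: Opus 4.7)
The plan is to leverage the $\mathbf{R}$-module spanning set from Proposition \ref{prop3:1} together with the two identities in Lemma \ref{lemma3:2}, reducing each claim to an induction whose inductive step is an explicit recursion.

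For part (a), $V_k$ is generated as an $\mathbf{R}$-algebra by the power sums $z_1^n + \dots + z_k^n$ and by $(z_1 \cdots z_k)^{\pm 1}$. Parts (a) and (b) of Lemma \ref{lemma3:2} state precisely that multiplication by these algebra generators sends any spanning element of Proposition \ref{prop3:1} back into $A^{\mathbf{R}}_k$, so extending by $\mathbf{R}$-linearity yields $V_k \cdot A^{\mathbf{R}}_k \subseteq A^{\mathbf{R}}_k$.

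For part (b), Proposition \ref{prop3:1} combined with part (a) of Lemma \ref{lemma3:2} reduces the claim to showing that $z_1^d * 1_1$ lies in the $V_2$-module generated by $1_1 * 1_1$ and $z_1 * 1_1$ for every $d \in \mathbb{Z}$. I would induct on $|d|$: the cases $d = 0, 1$ are the stated generators, and for $d \geq 2$, part (b) of Lemma \ref{lemma3:2} with $n = 1$ together with part (a) gives the three-term recursion
\begin{equation*}
z_1^d * 1_1 = (z_1 + z_2)(z_1^{d-1} * 1_1) - z_1 z_2 \cdot (z_1^{d-2} * 1_1),
\end{equation*}
which expresses $z_1^d * 1_1$ in terms of the previous two indices. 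The case $d \leq -1$ is symmetric, using $n = -1$.

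For part (c), the same strategy applies, but the induction is two-dimensional. Proposition \ref{prop3:1} and part (a) of Lemma \ref{lemma3:2} reduce the problem to showing that $z_1^{d_1} * z_1^{d_2} * 1_1$ lies in the $V_3$-submodule $M$ generated by the six listed elements, for every $(d_1, d_2) \in \mathbb{Z}^2$. Part (b) of Lemma \ref{lemma3:2} with $n = 1$, combined with part (a), yields the recursion
\begin{equation*}
z_1^{d_1} * z_1^{d_2} * 1_1 = (z_1 + z_2 + z_3)(z_1^{d_1 - 1} * z_1^{d_2} * 1_1) - z_1^{d_1 - 1} * z_1^{d_2 + 1} * 1_1 - (z_1 z_2 z_3)(z_1^{d_1 - 2} * z_1^{d_2 - 1} * 1_1),
\end{equation*}
along with analogous relations raising or lowering indices. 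The main obstacle is that these recursions intertwine $d_1$ and $d_2$: reducing $d_1$ by one shifts $d_2$ in two of the three resulting terms, so one cannot iterate a single-variable induction with the other index fixed. Closing the induction therefore requires a carefully chosen two-dimensional induction measure and explicit verification of certain boundary rows (for instance $d_2 \in \{0, 1\}$ with $d_1$ outside $\{0, 1, 2\}$), where several relations must be combined before the induction hypothesis applies.
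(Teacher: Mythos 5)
Your parts (a) and (b) are correct. For (a) you argue exactly as the paper does. For (b) you actually take a different and arguably cleaner route: the paper deduces (b) from (c) via an informal ``the conditions in two variables are more lenient than in three'' observation, whereas you reduce to $z_1^d * 1_1$ via Lemma \ref{lemma3:2}\ref{lemma3:2:a} and give the self-contained recursion $z_1^d * 1_1 = (z_1+z_2)(z_1^{d-1}*1_1) - z_1z_2(z_1^{d-2}*1_1)$ (and its $n=-1$ mirror), which follows correctly from Lemma \ref{lemma3:2} and closes a one-variable induction in both directions. That is a genuine improvement in transparency for part (b).

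Part (c), however, has a real gap, and you flag it yourself without filling it. The recursion you display sends $(d_1,d_2)$ to $(d_1-1,d_2)$, $(d_1-1,d_2+1)$ and $(d_1-2,d_2-1)$; one of these terms has a \emph{larger} second exponent, so no obvious induction measure decreases, and asserting that one ``requires a carefully chosen two-dimensional induction measure and explicit verification of certain boundary rows'' names the obstacle rather than overcoming it --- overcoming it is precisely the substance of the proof. The paper's resolution is concrete: it first verifies, by a finite list of explicit identities, that the six generators produce every $z_1^{d_1}*z_1^{d_2}*z_1^{d_3}$ with all exponents in $\{0,1,2\}$, and then inducts on the box $0\le d_i\le n$, splitting the step into two cases --- the relation built from $z_1+z_2+z_3$ handles $0\le d_2<n$, while the relation built from $z_1^{-1}+z_2^{-1}+z_3^{-1}$ handles $1<d_2\le n+1$ --- and these two ranges jointly cover all $d_2$ only once $n\ge 2$, which is exactly why the base case must be computed up to exponent $2$. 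Without those base-case computations and that case split, your induction for (c) does not close, so the proposal as written does not establish part (c).
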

\begin{proof}
    \ref{thm3:3:a} Note that every element of $V_k$ can be written in the form $$\frac{p(z_1, \dots, z_k)}{(z_1\dots z_k)^n}$$where $p(z_1, \dots,z_k)$ is a symmetric polynomial. Now, by the theory of symmetric polynomials, $p(z_1, \dots,z_k)$ can be written as a linear combination of products of polynomials of the form $$z_1^d + \dots + z_k^d,\quad (z_1\dots z_k)^d.$$
    Therefore, by Lemma \ref{lemma3:2} the product of any element of $V_k$ and any element of $A_k^\mathbf{R}$ is in $A^{\mathbf{R}}_k$, so $A^{\mathbf{R}}_k$ is a module over $V_k$.
    
    \ref{thm3:3:b} Observe that the conditions for generating an element $z_1^{d_2}*z_1^{d_3}$ in $A^\mathbf{R}_2$ using Lemma \ref{lemma3:2} are necessarily more lenient than the conditions for generating the element $z_1^{d_1}*z_2^{d_2}*z_3^{d_3}$ in $A^\mathbf{R}_3$ using Lemma \ref{lemma3:2}. Therefore, part \ref{thm3:3:c} of this theorem implies that all elements of the form $z_1^{d_2}*z_1^{d_3}$ are generated by $z_1*1_1$ and $1_1*1_1$ by simply observing the last 2 factors of the elements $z_1^{d_1}*z_2^{d_2}*z_3^{d_3}$. By Proposition \ref{prop3:1}, this implies that all of $A^\mathbf{R}_2$ is generated by $z_1*1_1$ and $1_1*1_1$.
    
    \ref{thm3:3:c} This proof will be split into two parts: The first part will be a list of computations to show that the desired generators generate all of the elements $z_1^{d_1} * z_1^{d_2} * z_1^{d_3}$ for $0\leq d_1, d_2, d_3\leq 2$ and the second part will be an induction argument to generalize this to $d_1, d_2, d_3\in\mathbb{Z}$. We will first prove the first part. Using Lemma \ref{lemma3:2}, we have 
    \begin{align*}
        1_1 * 1_1 * z_1 & = (z_1 + z_2 + z_3)1_1*1_1*1_1 - z_1 * 1_1*1_1-1_1*z_1*1_1,\\
        z_1 * 1_1 * z_1 & = (z_1 + z_2 + z_3)z_1*1_1*1_1 - z_1^2 * 1_1*1_1-z_1*z_1*1_1,\\
        1_1 * z_1 * z_1 & = (z_1^{-1} + z_2^{-1} + z_3^{-1})(z_1z_2z_3)1_1*1_1*1_1 - z_1 * 1_1*z_1-z_1*z_1*1_1,\\
        z_1^2 * 1_1 * z_1 & = (z_1^{-1} + z_2^{-1} + z_3^{-1})(z_1z_2z_3)z_1*1_1*1_1 - z_1^2 * z_1*1_1-(z_1z_2z_3)1_1*1_1*1_1.
    \end{align*}
    From here, we can see that $d_2$ and $d_3$ have the same restrictions, so we will omit equations that can be obtained from other equations by switching $d_2$ and $d_3$.
    \begin{align*}
        z_1^2 * z_1^2 * 1_1 & = (z_1^{-1} + z_2^{-1} + z_3^{-1})(z_1z_2z_3)z_1*z_1*1_1 - (z_1z_2z_3)z_1 * 1_1*1_1-(z_1z_2z_3)1_1*z_1*1_1,\\
        1_1 * z_1^2 * 1_1 & = (z_1 + z_2 + z_3)1_1*z_1*1_1 - z_1 * z_1*1_1-1_1*z_1*z_1,\\
        z_1 * z_1^2 * 1_1 & = (z_1 + z_2 + z_3)z_1*z_1*1_1 - z_1^2*z_1*1_1-(z_1z_2z_3)1_1*1_1*1_1,\\
        1_1 * z_1^2 * z_1 & = (z_1^{-1} + z_2^{-1} + z_3^{-1})(z_1z_2z_3)1_1*z_1*1_1 - z_1 * z_1^2*1_1-(z_1z_2z_3)1_1*1_1*1_1,\\
        1_1 * z_1^2 * z_1^2 & = (z_1^{-1} + z_2^{-1} + z_3^{-1})(z_1z_2z_3)1_1*z_1*z_1 - (z_1z_2z_3)1_1 * 1_1*z_1-(z_1z_2z_3)1_1*z_1*1_1.
    \end{align*}
    From here, all other elements of the desired form can be obtained by applying Lemma \ref{lemma3:2}\ref{lemma3:2:a} to existing elements, so we are done with the first part of the proof.
    
    For the second part of the proof, we will prove that we can generate all elements with $0\leq d_1, d_2, d_3\leq n$ for any $n$ using induction. Our base case is $n = 2$, which we proved in the first part of the proof. Now, we need to prove that we can generate all elements with $0\leq d_1, d_2, d_3\leq n + 1$. By symmetry, we only need to show that we can generate elements with $d_1 = n + 1$. Also note that if an element has $d_2, d_3>0$ then we can simply apply Lemma \ref{lemma3:2}\ref{lemma3:2:a} to generate it, so by symmetry we can also say that $d_3 = 0$. Now, if $0\leq d_2 < n$, then we have $$z_1^{n + 1} * z_1^{d_2} * 1_1 = (z_1 + z_2 + z_3)z_1^n * z_1^{d_2} * 1_1 - z_1^n * z_1^{d_2 + 1} * 1_1 - z_1^n * z_1^{d_2} * z_1.$$Similarly, if we have $1 < d_2\leq n + 1$ then we have $$z_1^{n + 1} * z_1^{d_2} * 1_1 = (z_1^{-1} + z_2^{-1} + z_3^{-1})(z_1z_2z_3)z_1^n * z_1^{d_2 - 1} * 1_1 - (z_1z_2z_3)z_1^n * z_1^{d_2 - 2} * 1_1 - (z_1z_2z_3)z_1^{n - 1} * z_1^{d_2 - 1} * 1_1.$$Since our base case is 2, we are only concerned with $n\geq 2$, so there is no $d_2$ with $n\leq d_2\leq1$. Therefore, we can generate all elements with $0\leq d_1, d_2, d_3\leq n + 1$, so we can generate all elements with $d_1, d_2, d_3$ nonnegative. Now, we can simply apply Lemma \ref{lemma3:2}\ref{lemma3:2:a} to generate all elements $z_1^{d_1}*z_1^{d_2}*z_1^{d_3}$ with $d_1, d_2, d_3\in\mathbb{Z}$.
\end{proof}

A question that naturally arises from this theorem is whether the generating sets shown in parts \ref{thm3:3:b} and \ref{thm3:3:c} are of minimal size. Although we cannot give a full answer to this question, the following remark shows a satisfactory answer in the context of this section.

\begin{remark}
    The number of generators of the form $z_1^{d_1}*\cdots*z_1^{d_k}$ needed to generate $A^\mathbf{R}_k$ using only the conditions in Proposition \ref{prop3:1} and Lemma \ref{lemma3:2} is bounded below by $k(k-1)$. To see why this is true, first assume that there is some set of generators $S$ that generates $A^\mathbf{R}_k$ such that $|S|<k(k-1)$. Let $S_i=\{z_1^{d_1}*\cdots*z_1^{d_k}\mid d_1+\cdots+d_k\equiv i\pmod k\}$. By the Pigeonhole Principle, $|S\cap S_i| < k-1$ for some $i$. We will show that $S$ cannot generate any elements of $S_i$ other than elements of the form $z_1^{d_1+n}*\cdots*z_1^{d_k+n}$ for some $z_1^{d_1}*\cdots*z_1^{d_k}\in S$ and $n\in\mathbb{Z}$. 
    
    First, let us consider Lemma \ref{lemma3:2}\ref{lemma3:2:a}. Note that the sum of the exponents of an element generated using this condition is $kn$ plus the sum of the exponents of the element used to generate it, for some integer $n$. Thus, as $k\mid kn$, both the generating and the generated element belong to the same $S_j$. If they are in $S_i$, then the generated element will simply be an element of the form $z_1^{d_1+n}*\cdots*z_1^{d_k+n}$ for some $z_1^{d_1}*\cdots*z_1^{d_k}\in S$. 
    
    Now we will consider Lemma \ref{lemma3:2}\ref{lemma3:2:b}. This condition requires the element $z_1^{d_1}*\cdots*z_1^{d_k}$ and $k-1$ of the elements $$z_1^{d_1+n}*z_1^{d_2}*\cdots*z_1^{d_k},z_1^{d_1}*z_1^{d_2+n}*z_1^{d_3}*\cdots*z_1^{d_k},\dots,z_1^{d_1}*\cdots*z_1^{d_k+n}$$to generate the last of those elements ($z_1^{d_1}*\cdots*z_1^{d_k}$ cannot be generated since we cannot invert $z_1^n + z_2^n + z_3^n + z_4^n$). In particular, the sums of the exponents of all of the $k$ aforementioned elements are the same, so to generate an element we need $k-1$ other elements with the same sum of exponents. Now, let us say we want to generate an element in $S_i$ whose sum of exponents is $s$. In order to generate the element we must have $k-1$ other elements whose sum of exponents is also $s$. However, for each $z_1^{d_1}*\cdots*z_1^{d_k}\in S\cap S_i$ there is a unique $n$ such that the sum of the exponents of $z_1^{d_1+n}*\cdots*z_1^{d_k+n}$ is $s$. Since $|S\cap S_i| < k-1$, this implies we have less than $k-1$ elements whose sum of exponents is $s$. Therefore, we cannot generate any elements of $S_i$ using Lemma \ref{lemma3:2}\ref{lemma3:2:b}.
\end{remark}

By the above remark, the generating sets found in Theorem \ref{thm3:3} are indeed the smallest possible sets that we can find using Proposition \ref{prop3:1} and Lemma \ref{lemma3:2}. The remark also prompts the question of whether the lower bounds can be achieved for $k\geq 4$. As the following remark shows, they are not achievable using only Proposition \ref{prop3:1} and Lemma \ref{lemma3:2}. Note that we do not prove that they are not achievable in general, as there may be additional relations between shuffle elements that allow the lower bounds to be achieved.

\begin{remark}
    Proposition \ref{prop3:1} and Lemma \ref{lemma3:2} do not necessarily imply that $A^{\mathbf{R}}_k$ is finitely generated as a module over $V_k$ for $k\geq 4$.
    To see this, note that by Proposition \ref{prop3:1}, any generators of $A^\mathbf{R}_k$ can be written as a linear combination of elements of the form $z_1^{d_1}*\cdots*z_1^{d_k}$, so if $A^\mathbf{R}_k$ is finitely generated then it must be finitely generated by generators of the form $z_1^{d_1}*\cdots*z_1^{d_k}$. Thus, we need only consider those generators.
    
    For simplicity of notation, we will only show the case $k = 4$. For $k > 4$, we can simply take the first four factors in the shuffle product and apply the proof.
    
    Let us first define the \textbf{range} of an element $z_1^{d_1} * z_1^{d_2} * z_1^{d_3} * z_1^{d_4}$ to be $$R(z_1^{d_1} * z_1^{d_2} * z_1^{d_3} * z_1^{d_4}) = \text{max}(d_{\sigma(1)} + d_{\sigma(2)} - d_{\sigma(3)} - d_{\sigma(4)})_{\sigma\in S_4}.$$Now, consider any finite set of generators and let $R_{max}$ be the maximum range of any generator. We will prove that Lemma \ref{lemma3:2} can only generate elements whose range is at most $R_{max}$. Let us say that we want to generate an element $z_1^{d_1} * z_1^{d_2} * z_1^{d_3} * z_1^{d_4}$ with $$R(z_1^{d_1} * z_1^{d_2} * z_1^{d_3} * z_1^{d_4}) > R_{max}.$$
    If we use Lemma \ref{lemma3:2}\ref{lemma3:2:a} we get $$z_1^{d_1} * z_1^{d_2} * z_1^{d_3} * z_1^{d_4} = (z_1z_2z_3z_4)^nz_1^{d_1 - n} * z_1^{d_2 - n} * z_1^{d_3 - n} * z_1^{d_4 - n},$$
    however we have 
    \begin{align*}
        R(z_1^{d_1 - n} * z_1^{d_2 - n} * z_1^{d_3 - n} * z_1^{d_4 - n}) & = \text{max}((d_{\sigma(1)}-n) + (d_{\sigma(2)}-n) - (d_{\sigma(3)}-n) - (d_{\sigma(4)}-n))_{\sigma\in S_4}\\
        & = \text{max}(d_{\sigma(1)} + d_{\sigma(2)} - d_{\sigma(3)} - d_{\sigma(4)})_{\sigma\in S_4} = R(z_1^{d_1} * z_1^{d_2} * z_1^{d_3} * z_1^{d_4})\\
        & > R_{max}.
    \end{align*}
    
    So Lemma \ref{lemma3:2}\ref{lemma3:2:a} cannot generate $z_1^{d_1} * z_1^{d_2} * z_1^{d_3} * z_1^{d_4}$ since $z_1^{d_1 - n} * z_1^{d_2 - n} * z_1^{d_3 - n} * z_1^{d_4 - n}$ cannot be a generator. If we use Lemma \ref{lemma3:2}\ref{lemma3:2:b} then since the choice of variable is not important, without loss of generality we can say that we have 
    \begin{multline*}
        z_1^{d_1} * z_1^{d_2} * z_1^{d_3} * z_1^{d_4}=(z_1^n + z_2^n + z_3^n + z_4^n)z_1^{d_1-n} * z_1^{d_2} * z_1^{d_3} * z_1^{d_4}\\
        - z_1^{d_1-n} * z_1^{d_2+n} * z_1^{d_3} * z_1^{d_4} - z_1^{d_1-n} * z_1^{d_2} * z_1^{d_3+n} * z_1^{d_4} - z_1^{d_1-n} * z_1^{d_2} * z_1^{d_3} * z_1^{d_4+n}.
    \end{multline*}
    
    Now, if the $d_1$ term is positive in the range of $z_1^{d_1} * z_1^{d_2} * z_1^{d_3} * z_1^{d_4}$, then if $n$ is nonpositive then $$R(z_1^{d_1-n} * z_1^{d_2} * z_1^{d_3} * z_1^{d_4}) = R(z_1^{d_1} * z_1^{d_2} * z_1^{d_3} * z_1^{d_4}) - n \geq R(z_1^{d_1} * z_1^{d_2} * z_1^{d_3} * z_1^{d_4}) > R_{max},$$so $z_1^{d_1-n} * z_1^{d_2} * z_1^{d_3} * z_1^{d_4}$ cannot be a generator. If $n$ is positive then without loss of generality assume that $d_2$ is also positive in the range of $z_1^{d_1} * z_1^{d_2} * z_1^{d_3} * z_1^{d_4}$. Then we have $$R(z_1^{d_1-n} * z_1^{d_2+n} * z_1^{d_3} * z_1^{d_4}) = R(z_1^{d_1-n} * z_1^{d_2} * z_1^{d_3} * z_1^{d_4}) + n \geq R(z_1^{d_1} * z_1^{d_2} * z_1^{d_3} * z_1^{d_4}) > R_{max},$$so $z_1^{d_1-n} * z_1^{d_2+n} * z_1^{d_3} * z_1^{d_4}$ cannot be a generator. A similar proof holds for the case where the $d_1$ term is negative. Therefore, no element with a range greater than $R_{max}$ can be generated, so no finite set of generators can generate all of $A^{\mathbf{R}}_4$ using only the conditions in Proposition \ref{prop3:1} and Lemma \ref{lemma3:2}.
\end{remark}

These two remarks show the limitations of Lemma \ref{lemma3:2}, however the approach of using the generators of the integral shuffle algebra to study its structure may still be useful. Lemma \ref{lemma3:2} does not cover all of the relations between the generators of $A^\mathbf{R}_k$ as a $V_k$-module. For example, since $A^\mathbf{R}_k\subset V_k$, the generators themselves are also scalars, leading to trivial relations such as $$(1_1*1_1)z_1*1_1=(z_1*1_1)1_1*1_1.$$
These relations do not follow from Lemma \ref{lemma3:2}. In general, the discovery and description of new relations may help to solve the questions of whether the generating sets in parts \ref{thm3:3:b} and \ref{thm3:3:c} of Theorem \ref{thm3:3} are of minimal size and whether $A^\mathbf{R}_k$ is finitely generated for $k\geq 4$. 

\section{Conditions for the general case}\label{sec4}
In this section we will discuss some necessary conditions for an element of $V$ to be in $A^{\mathbf{R}}$. We present these necessary conditions in the form of membership in an ideal of $V_k$. Recall the definition of the wheel conditions (\ref{prop2:5}):$$p(z_1, z_2, z_3, \dots) = 0\text{ whenever }\bigg\{\frac{z_1}{z_2}, \frac{z_2}{z_3}, \frac{z_3}{z_1}\bigg\} = \bigg\{q_1, q_2, \frac{1}{q}\bigg\},$$which Negut proved were necessary in \cite{negut2014shuffle}. Note that they can be rewritten in an ideal form:

\begin{proposition}[Ideal form of the wheel conditions]
    $A^{\mathbf{R}}_k$ is contained in the intersection of the ideals $$(q_1z_1-z_2,q_2z_2-z_3),\quad(q_2z_1-z_2,q_1z_2-z_3)$$ of $V_k$ for $k \geq 3$.
\end{proposition}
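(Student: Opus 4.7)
The plan is to translate ideal membership into a vanishing condition on a specific two-dimensional linear subvariety, and then to identify that subvariety with a wheel locus of Proposition \ref{prop2:5}. Interpreting the ideals inside the ambient Laurent polynomial ring $\mathbf{R}[z_1^{\pm 1},\dots,z_k^{\pm 1}]$, the main point is that both ideals are prime, so that $p$ lies in such an ideal if and only if $p$ vanishes on the corresponding variety.

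For $I_1 = (q_1 z_1 - z_2,\, q_2 z_2 - z_3)$, I would introduce the substitution map
\[ \phi_1 \colon \mathbf{R}[z_1^{\pm 1}, \dots, z_k^{\pm 1}] \longrightarrow \mathbf{R}[z_1^{\pm 1}, z_4^{\pm 1}, \dots, z_k^{\pm 1}], \qquad z_2 \mapsto q_1 z_1, \quad z_3 \mapsto q z_1, \]
identity on the remaining variables, which is well-defined because $q_1 z_1$ and $q z_1$ are units in the target. The inclusion of the target into the source is a section of $\phi_1$, and every class modulo $I_1$ is represented by an element of $\mathbf{R}[z_1^{\pm 1}, z_4^{\pm 1}, \dots, z_k^{\pm 1}]$: use the two relations to eliminate $z_2$ and $z_3$ (legal even for negative powers, since $z_2 \equiv q_1 z_1$ and $z_3 \equiv q z_1$ modulo $I_1$ and both right-hand sides are units). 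Hence $\phi_1$ descends to an isomorphism of quotient rings, so $I_1$ is prime and $p \in I_1$ if and only if $p(z_1, q_1 z_1, q z_1, z_4, \dots, z_k) = 0$. The same argument with $q_1 \leftrightarrow q_2$ handles $I_2 = (q_2 z_1 - z_2,\, q_1 z_2 - z_3)$.

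Next I would match these vanishing conditions to the wheel conditions. Under $z_2 = q_1 z_1,\ z_3 = q z_1$, the three relevant ratios are $(z_1/z_2,\, z_2/z_3,\, z_3/z_1) = (1/q_1,\, 1/q_2,\, q)$, whose underlying set is the inverse of $\{q_1, q_2, 1/q\}$ and so does not literally match the set in Proposition \ref{prop2:5}. However, any shuffle element is symmetric in its variables, so I may pre-apply the transposition $z_1 \leftrightarrow z_3$; the relabeled ratios become $(q_2,\, q_1,\, 1/q)$, which is a permutation of $\{q_1, q_2, 1/q\}$. Proposition \ref{prop2:5} therefore forces vanishing on the $I_1$-locus, and the same trick (now with $q_1$ and $q_2$ swapped) handles the $I_2$-locus. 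Combining with the prime-ideal description from the previous step, $p \in I_1 \cap I_2$ for every $p \in A^{\mathbf{R}}_k$ and $k \geq 3$.

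The only point requiring genuine care is the matching in the third paragraph: the ordered tuple of ratios cut out by each ideal is not literally the unordered set $\{q_1, q_2, 1/q\}$ stated in Proposition \ref{prop2:5}, and one must invoke the $S_3$-symmetry of shuffle elements to bridge this gap. Once this identification is spelled out the proof is essentially immediate, and I do not anticipate any substantial obstacle.
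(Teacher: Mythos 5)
Your proof is correct and follows essentially the same route as the paper: reduce modulo each ideal, observe that this amounts to the substitution $z_2 = q_1 z_1$, $z_3 = q z_1$ (resp.\ the same with $q_1,q_2$ swapped), and invoke the wheel conditions of Proposition \ref{prop2:5}. You are in fact more careful than the paper on exactly the two points you flag --- justifying via the retraction $\phi_1$ that ideal membership is equivalent to vanishing under the substitution, and using the symmetry of shuffle elements to reconcile the ordered ratios $(1/q_1,\,1/q_2,\,q)$ with the unordered set $\{q_1, q_2, 1/q\}$ --- both of which the paper's proof passes over silently.
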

\begin{proof}
    Take the the elements of $A^\mathbf{R}_k$ modulo $(q_1z_1-z_2,q_2z_2-z_3)$. This is equivalent to setting $$q_1z_1-z_2=q_2z_2-z_3=0,$$
    or $$\frac{z_2}{z_1}=q_1,\quad \frac{z_3}{z_2}=q_2.$$
    Then we also have $$\frac{z_1}{z_3}=\frac{z_1}{z_2}\cdot\frac{z_2}{z_3}=\frac{1}{q_1}\cdot\frac{1}{q_2}=\frac{1}{q}.$$
    Therefore, by the wheel conditions, all elements of $A^\mathbf{R}_k$ are equivalent to 0 modulo $(q_1z_1-z_2,q_2z_2-z_3)$, so they are in the ideal. Similarly, taking the elements modulo $(q_2z_1-z_2,q_1z_2-z_3)$ is equivalent to setting $$\frac{z_2}{z_1}=q_2,\quad\frac{z_3}{z_2}=q_1,\quad\frac{z_1}{z_3}=\frac{1}{q}.$$
    Again, by the wheel conditions, all elements of $A^\mathbf{R}_k$ are equivalent to 0 modulo $(q_2z_1-z_2,q_1z_2-z_3)$, so they are in the ideal.
\end{proof}

To find our next necessary condition, we revisit $A^\mathbf{R}_2$, which we found in Theorem \ref{thm3:3}\ref{thm3:3:b} to be the $V_2$ module generated by $1_1*1_1$ and $z_1*1_1$, or equivalently the ideal $$(1_1*1_1,z_1*1_1)=$$$$(2qz_1^2 - (1 + q_1 + q_2 - 2q + q_1q + q_2q + q^2)z_1z_2 + 2qz_2^2, qz_1^3 + (-q_1 - q_2 + 2q - q^2)(z_1+z_2)z_1z_2 + qz_2^3)$$ of $V_2$. We may rewrite it in the simpler form $$\bigg(1_1*1_1,\frac{2(z_1*1_1)-(z_1+z_2)(1_1*1_1)}{z_1z_2}\bigg)=$$$$(2qz_1^2 - (1 + q_1 + q_2 - 2q + q_1q + q_2q + q^2)z_1z_2 + 2qz_2^2, (1 - q_1)(1 - q_2)(1 - q)(z_1 + z_2)),$$and we will extend this to general $A^\mathbf{R}_k$ in the next theorem.

\begin{theorem}\label{thm4:2}
    $A^{\mathbf{R}}_k$ is contained in the ideal $$(2qz_1^2 - (1 + q_1 + q_2 - 2q + q_1q + q_2q + q^2)z_1z_2 + 2qz_2^2, (1 - q_1)(1 - q_2)(1 - q)(z_1 + z_2))$$of $V_k$ for $k \geq 2$.
\end{theorem}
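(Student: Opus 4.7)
The plan is to argue by induction on $k$. The base case $k = 2$ would follow from Theorem \ref{thm3:3}\ref{thm3:3:b}: it says $A^\mathbf{R}_2$ is the $V_2$-module generated by $1_1 * 1_1 = f_1$ and $z_1 * 1_1$, and the identity $2(z_1 * 1_1) = (z_1 + z_2) f_1 + z_1 z_2 f_2$, which is precisely the rearrangement performed in the paragraph preceding the theorem statement, puts both generators (and hence the whole module) into $(f_1, f_2)$.

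For the inductive step, Proposition \ref{prop3:1} reduces the problem to showing $z_1^d * Q \in I := (f_1(z_1, z_2), f_2(z_1, z_2))$ for every $d \in \mathbb{Z}$ and every $Q \in A^\mathbf{R}_{k-1}$. I would use the $S_1 \times S_{k-1}$ invariance of the integrand in the shuffle product to expand
\begin{equation*}
    z_1^d * Q = \sum_{j=1}^k U_j, \qquad U_j := z_j^d \cdot Q(\{z_l\}_{l \neq j}) \cdot \prod_{l \neq j} \omega(z_j, z_l).
\end{equation*}
For $j \geq 3$, the factor $Q(\{z_l\}_{l \neq j})$ still has both $z_1$ and $z_2$ among its arguments, so the inductive hypothesis combined with the symmetry of $Q$ gives $Q(\{z_l\}_{l \neq j}) \in I$ in $\mathbf{R}[z_1^{\pm 1}, \ldots, z_k^{\pm 1}]$, hence $U_j \in I$.

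The main obstacle will be $U_1 + U_2$, since when $j \in \{1, 2\}$ the $Q$-factor is missing $z_1$ or $z_2$ and only lies in $(f_1(z_a, z_b), f_2(z_a, z_b))$ for some $\{a, b\} \neq \{1, 2\}$. To handle this I would extract from the $1_1 * 1_1$ computation the two identities $\omega(z_1, z_2) + \omega(z_2, z_1) = f_1(z_1, z_2)$ and $(z_1 - z_2)[\omega(z_1, z_2) - \omega(z_2, z_1)] = z_1 z_2 f_2(z_1, z_2)$, which let me write $\omega(z_1, z_2), \omega(z_2, z_1) = \tfrac{1}{2} f_1 \pm \tfrac{z_1 z_2 f_2}{2(z_1 - z_2)}$. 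Setting $U := z_1^d Q(z_2, z_3, \ldots, z_k) \prod_{l \geq 3} \omega(z_1, z_l)$ and $V := z_2^d Q(z_1, z_3, \ldots, z_k) \prod_{l \geq 3} \omega(z_2, z_l)$, so that $U_1 = U \omega(z_1, z_2)$ and $U_2 = V \omega(z_2, z_1)$, substitution yields
\begin{equation*}
    U_1 + U_2 = \tfrac{1}{2} f_1 (U + V) + \tfrac{z_1 z_2 f_2}{2(z_1 - z_2)} (U - V).
\end{equation*}
The first summand is manifestly in $(f_1)$. For the second, the key observation is that swapping $z_1 \leftrightarrow z_2$ interchanges $U$ and $V$ (using that $Q$ is symmetric), so $U - V$ is antisymmetric in $z_1, z_2$ and is therefore divisible by $(z_1 - z_2)$ in the ambient Laurent polynomial ring; the quotient $(U - V)/(z_1 - z_2)$ is then a genuine element of the ring, placing the second summand in $(f_2)$. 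Combining gives $U_1 + U_2 \in I$, which together with $U_j \in I$ for $j \geq 3$ completes the induction.
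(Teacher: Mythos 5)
Your argument is correct and rests on the same key identity as the paper's own proof: writing $\omega(z_1,z_2)$ and $\omega(z_2,z_1)$ as $\tfrac12 f_1 \pm \tfrac{z_1z_2 f_2}{2(z_1-z_2)}$, where $f_1, f_2$ denote the two generators of the ideal --- your two identities are exactly the symmetric and antisymmetric parts of the paper's expansion of $\omega(z_m,z_n)$ --- and then checking summand by summand that the symmetrization lands in the ideal. The organizational differences (inducting on $k$ through $z_1^d * Q$ via Proposition \ref{prop3:1} rather than showing directly that the shuffle product of two ideal elements stays in the ideal, and collapsing $\mathrm{Sym}$ to a sum over $k$ coset representatives rather than over all of $S_k$) are equivalent reformulations. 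The one place you genuinely sharpen the argument is the cross terms: the paper gives $f_2$ the coefficient $\tfrac{z_mz_n}{2(z_m-z_n)}$, which is not a Laurent polynomial, whereas by pairing $U_1$ with $U_2$ and invoking the antisymmetry of $U-V$ you remove the pole along $z_1=z_2$. Do note, however, that $U$ and $V$ still carry poles along $z_1=z_l$ and $z_2=z_l$ for $l\ge 3$ from the remaining $\omega$ factors, so $(U-V)/(z_1-z_2)$ is a genuine element only of the localization of $\mathbf{R}[z_1^{\pm1},\dots,z_k^{\pm1}]$ at the differences $z_i-z_j$, not of the ring itself; like the paper's proof, yours really establishes membership of each grouped summand in the extended ideal in that localization and then relies on the total sum being an honest Laurent polynomial. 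Your version makes that step more transparent but does not fully close it.
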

\begin{proof}
    We will show that the shuffle product of 2 elements in the ideal (or elements in $A^\mathbf{R}_1$) is also in the ideal, which will imply the desired result. Consider the shuffle product of any two shuffle elements:$$P(z_1, \dots, z_k) * Q(z_1, \dots, z_l) = \frac{1}{k!l!}\text{Sym} \bigg[P(z_1, \dots, z_k)Q(z_{k + 1}, \dots, z_{k + l})\prod_{1 \leq i \leq k < j \leq k + l}\omega(z_i,z_j)\bigg].$$We will prove that every summand of the Sym is contained in the desired ideal. For any given summand, let the symmetric operator take $z_1$ and $z_2$ to $z_m$ and $z_n$, respectively. If $m$ and $n$ are either both in $\{1, \dots, k\}$ or both in $\{k + 1, \dots, k + l\}$ then the summand will be in the ideal since the corresponding factor will have at least 2 variables and will therefore be in the ideal. If $m$ is in $\{1, \dots, k\}$ and $n$ is in $\{k + 1, \dots, k + l\}$, then the product $$\prod_{1 \leq i \leq k < j \leq k + l}\frac{(z_i - qz_j)(z_j - q_1z_i)(z_j - q_2z_i)}{z_i - z_j}$$contains the term $$\frac{(z_m - qz_n)(z_n - q_1z_m)(z_n - q_2z_m)}{z_m - z_n}$$$$= \frac{1}{2}(2qz_m^2 - (1 + q_1 + q_2 - 2q + q_1q + q_2q + q^2)z_mz_n + 2qz_n^2) + \frac{z_mz_n}{2(z_m - z_n)}(1 - q_1)(1 - q_2)(1 - q)(z_m + z_n),$$so the summand is in the ideal. Similarly, if $n$ is in $\{1, \dots, k\}$ and $m$ is in $\{k + 1, \dots, k + l\}$, then the summand contains the term $$\frac{(z_n - qz_m)(z_m - q_1z_n)(z_m - q_2z_n)}{z_n - z_m}$$$$= \frac{1}{2}(2qz_m^2 - (1 + q_1 + q_2 - 2q + q_1q + q_2q + q^2)z_mz_n + 2qz_n^2) + \frac{z_mz_n}{2(z_n - z_m)}(1 - q_1)(1 - q_2)(1 - q)(z_m + z_n),$$so the summand is in the ideal.\end{proof}
    
    Two open problems naturally follow from the above results. The first is whether a similar result to Theorem \ref{thm4:2} can be derived from the six generators of $A^\mathbf{R}_3$ described in Theorem \ref{thm3:3}\ref{thm3:3:c}. The second is whether the intersection of the ideal form of the wheel conditions and the ideal in Theorem \ref{thm4:2} is equal to the ideal generated by the six generators of $A^\mathbf{R}_3$ when considered over $V_k$ for $k\geq 3$. If this is true, then the wheel conditions and Theorem \ref{thm4:2} would form necessary and sufficient conditions for $A^\mathbf{R}_3$, which would be a remarkable result.
    
    We conclude this section with the following corollary, which presents an observation that may be useful in future considerations of the integral shuffle algebra.
    
\begin{corollary}
    Every element $P(z_1, z_2, \dots, z_k)\in A^{\mathbf{R}}$ must satisfy$$P(z_1, -z_1,\dots, z_k) = c(1 + q_1)(1 + q_2)(1 + q)$$for some $c\in V_k$.
\end{corollary}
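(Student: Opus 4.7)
The plan is to deduce this as a near-immediate consequence of Theorem \ref{thm4:2}. For $k \geq 2$, that theorem lets us write any $P \in A^{\mathbf{R}}_k$ as
\[
P = A \cdot f + B \cdot g,
\]
where $f(z_1,z_2) = 2qz_1^2 - (1 + q_1 + q_2 - 2q + q_1q + q_2q + q^2)z_1z_2 + 2qz_2^2$, $g(z_1,z_2) = (1-q_1)(1-q_2)(1-q)(z_1+z_2)$, and $A, B$ are Laurent polynomials in $z_1, \dots, z_k$ over $\mathbf{R}$. The strategy is then to specialize $z_2 = -z_1$ and read off the claimed factorization.

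The generator $g$ vanishes at $z_2 = -z_1$, since $z_1 + z_2$ becomes $0$. For the generator $f$, substituting $z_2 = -z_1$ and collecting terms gives
\[
f(z_1,-z_1) = (1 + q_1 + q_2 + 2q + q_1q + q_2q + q^2)z_1^2,
\]
and a routine expansion shows that this agrees with $z_1^2(1+q_1)(1+q_2)(1+q)$. Hence
\[
P(z_1,-z_1,z_3,\dots,z_k) = z_1^2 \cdot A(z_1,-z_1,z_3,\dots,z_k) \cdot (1+q_1)(1+q_2)(1+q),
\]
and one takes $c = z_1^2 A(z_1,-z_1,z_3,\dots,z_k)$. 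For $k = 1$ the statement is vacuous since there is no $z_2$ to specialize.

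The only piece of substance is the algebraic identity $f(z_1,-z_1) = z_1^2(1+q_1)(1+q_2)(1+q)$, which is a one-line computation rather than a genuine obstacle. It is, however, the whole reason Theorem \ref{thm4:2} implies the corollary, and arguably the reason the two-term generating set for $A^{\mathbf{R}}_2$ was rewritten in the simplified form preceding Theorem \ref{thm4:2}: the factor $(1+q_1)(1+q_2)(1+q)$ emerges automatically from the very generator chosen to encode the shuffle relation on two variables.
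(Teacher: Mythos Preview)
Your proof is correct and follows essentially the same approach as the paper: invoke Theorem~\ref{thm4:2}, specialize $z_2 = -z_1$, note that the second generator vanishes while the first becomes $z_1^2(1+q_1)(1+q_2)(1+q)$, and conclude. You are slightly more explicit than the paper in writing out $P = Af + Bg$ and identifying $c$, but the argument is the same.
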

\begin{proof}
    By Theorem \ref{thm4:2}, the integral shuffle algebra is contained in the ideal $$(2qz_1^2 - (1 + q_1 + q_2 - 2q + q_1q + q_2q + q^2)z_1z_2 + 2qz_2^2, (1 - q_1)(1 - q_2)(1 - q)(z_1 + z_2)).$$Therefore, we only need to prove that the two generators of the ideal satisfy the above condition. Indeed, the generators become $$2qz_1^2 - (1 + q_1 + q_2 - 2q + q_1q + q_2q + q^2)z_1(-z_1) + 2q(-z_1)^2 = z_1^2(1 + q_1)(1 + q_2)(1 + q)$$and$$(1 - q_1)(1 - q_2)(1 - q)(z_1 - z_1) = 0$$when we substitute $z_2 = -z_1$, which suffices for the proof.\end{proof}
    
\section{Acknowledgements}
I would like to thank my mentor, Yu Zhao, for introducing me to this project and providing helpful guidance throughout the research process. I would also like to thank Yongyi Chen and Dr.~Tanya Khovanova for providing suggestions and proofreading this paper. I also thank the MIT Math Department and the PRIMES-USA program for providing me with this wonderful research opportunity.\newpage
\printbibliography
\end{document}